\newtheorem{theorem}{Theorem}[section]
\newtheorem{proposition}[theorem]{Proposition}
\newtheorem{corollary}[theorem]{Corollary}
\newcommand{\proof}{\noindent{\bf Proof. }}
\newcommand{\qed}{\hfill $\square$\medskip}
\begin{document}

\title{More on graph pebbling number}

\author{
Saeid Alikhani$^{}$\footnote{Corresponding author}
\and
Fatemeh Aghaei
}

\date{\today}

\maketitle

\begin{center}
Department of Mathematical Sciences, Yazd University, 89195-741, Yazd, Iran\\
{\tt alikhani@yazd.ac.ir, aghaeefatemeh29@gmail.com}
\end{center}


\begin{abstract}
 Let $G=(V,E)$ be a simple graph. 
 A function $\phi:V\rightarrow \mathbb{N}\cup \{0\}$ is called a configuration of pebbles on the vertices of $G$  and the quantity $\sum_{u\in V}\phi(u)$
 is called the size of $\phi$ which is  just the total number of pebbles assigned to vertices. A pebbling step from a vertex $u$ to one of its
 neighbors $v$  reduces $\phi(u)$ by two and increases $\phi(v)$  by one. 
 Given a specified target vertex $r$ we
 say that $\phi$ is $t$-fold $r$-solvable, if some sequence of pebbling
 steps places at least $t$  pebbles on $r$. Conversely, if no such steps
 exist, then $\phi$ is $r$-unsolvable. The minimum positive integer $m$ such that
 every configuration of size $m$ on the vertices of $G$ is $t$-fold
 $r$-solvable is denoted by $\pi_t(G,r)$. The $t$-fold pebbling number of $G$ is defined to be
 $\pi_t(G)= max_{r\in V(G)}\pi_t(G,r)$.  When $t=1$,  we simply write
 $\pi(G)$, which is the pebbling number of $G$. In this note, we study the pebbling number for some specific graphs.  Also we investigate the pebbling number of corona and  neighbourhood corona of two graphs.

\end{abstract}

\noindent{\bf Keywords:} graph pebbling, cover pebbling number, corona, neighbourhood corona.

\medskip
\noindent{\bf AMS Subj.\ Class.}:   05C76.

\section{Introduction and definitions}
 Let $G=(V,E)$ be a simple graph of order $n$. 
 A function $\phi:V\rightarrow \mathbb{N}\cup \{0\}$ is called a configuration of pebbles on the vertices of $G$  and the quantity $\sum_{u\in V}\phi(u)$
 is called the size of $\phi$ which is  just the total number of pebbles assigned to vertices. A pebbling step from a vertex $u$ to one of its
 neighbors $v$  reduces $\phi(u)$ by two and increases $\phi(v)$  by one. 
 Given a specified target vertex $r$, we
 say that $\phi$ is $t$-fold $r$-solvable, if some sequence of pebbling
 steps places $t$ (or more)  pebbles on $r$. Conversely, if no such steps
 exist, then $\phi$ is $r$-unsolvable. 
 Furthermore,
 a configuration is solvable whenever it is $r$-solvable for every vertex $r$. 
 The minimum positive integer $m$ such that
 every configuration of size $m$ on the vertices of $G$ is $t$-fold
 $r$-solvable is denoted by $\pi_t(G,r)$. The $t$-fold pebbling number of $G$ is defined to be
 $\pi_t(G)= max_{r\in V(G)}\pi_t(G,r)$.  When $t=1$,  we simply write
 $\pi(G)$, which is the pebbling number of $G$. It is obvious
 that $\pi(G)\leq \pi_2(G) \leq 2\pi(G)$.

The configuration with a single pebble on every vertex
except the target shows that $\pi(G)\geq n$, while the configuration with
$2^{ecc(r)}-1$  pebbles on the farthest vertex from $r$, and no
pebbles elsewhere, shows that $\pi(G)\geq 2 {\rm diam}(G)$ when $r$ is
chosen to have $ecc(r)=diam(G)$.
As usual let $Q^d$ be the $d$-dimensional hypercube  as the graph on all binary $d$-tuples,
pairs of which that differ in exactly one coordinate are
joined by an edge. Chung \cite{Chung} proved that $\pi(Q^d)=2^d$. Graphs $G$ that, like
$Q^d$  have $\pi(G)=|V(G)|$  have come to be known as Class $0$.
The terminology comes from a lovely theorem of Pachter,
Snevily, and Voxman \cite{Pachter}, which states that if $diam(G)=2$, 
then $\pi(G)\leqslant n+1$. 
Therefore there are two classes of diameter two graphs, Class $0$ and
 Class $1$. The
Class $0$ graphs are $2$-connected. 

The cover pebbling number $ \gamma(G) $ to be the minimum number of pebbles needed to place at least a pebble on every vertex of the graph using a sequence of pebbling moves, regardless of the initial configuration. In \cite{Betsy}, the cover pebbling number for several classes of graphs, including complete graphs, paths, fuses (a fuse is a path with leaves attached at one end), and more generally, trees, is established. They also described the structure of the largest non-coverable configuration on a tree. It is not hard to see that $\gamma(K_{n})=2n-1$ and 
$ \gamma(P_{n})=2^{n}-1.$ Note that $\pi(P_n)=2^{n-1}$,  $\pi(K_n)=n$ and the ratio 
$\frac{\gamma(G)}{\pi(G)}$ is unbounded, even within the class of trees (\cite{Betsy}). 

\medskip
In this note,  we study the pebbling number for some specific graphs.  Also we investigate the pebbling number of corona and  neighbourhood corona of two graphs.

\section{Main results} 
 Let to start with  the cover pebbling number of cycle graph and $Q^n$.   We have a lower bound for $\gamma(C_n)$ by putting all the pebbles on one vertex and reaching the other vertices from that vertex. Sjostrand proved the exact value of cover pebbling number of cycle graph and $Q^n$ as follows:
 
 \begin{theorem}{\rm\cite{sjostrand}}
\begin{itemize}
\item[(i)]
$\gamma(C_{2k})=3(2^{k}-1)$,
\item[(ii)]
$\gamma(C_{2k-1})= 2^{k+1}-3.$
\item[(iii)]
 $ \gamma(Q^{n})=3^{n}.$ 

\end{itemize}
\end{theorem}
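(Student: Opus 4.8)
The plan is to reduce all three parts to one computation via the cover pebbling (``stacking'') theorem of Sjostrand \cite{sjostrand} (obtained independently by Vuong and Wyckoff): for every connected graph $G$,
\[
\gamma(G)=\max_{v\in V(G)} s(v),\qquad\text{where } s(v):=\sum_{u\in V(G)}2^{d(u,v)}.
\]
Granting this, each item becomes an evaluation of $s(v)$ on a vertex-transitive graph, where $s(v)$ does not depend on the choice of $v$, so one may fix an arbitrary vertex.

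For the lower bound $\gamma(G)\ge s(v)$, which I would sketch for completeness, I would attach to a fixed vertex $v$ the weight function $w(\psi)=\sum_{u}2^{d(u,v)}\psi(u)$. A pebbling move from $x$ to a neighbour $y$ removes $2\cdot 2^{d(x,v)}$ from $w$ and adds $2^{d(y,v)}$, so it changes $w$ by $2^{d(y,v)}-2^{d(x,v)+1}\le 0$ since $d(y,v)\le d(x,v)+1$; hence $w$ is non-increasing along any sequence of moves. The configuration with $s(v)-1$ pebbles on $v$ and none elsewhere has $w=s(v)-1$, while every covering configuration has $w\ge\sum_u 2^{d(u,v)}=s(v)$; thus this configuration is uncoverable and $\gamma(G)\ge s(v)$ for each $v$. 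The matching upper bound -- that $\max_v s(v)$ pebbles always suffice from an arbitrary starting configuration -- is the substantive content of the cited theorem, which I would simply quote.

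It then remains to compute $s(v)$. In $C_{2k}$ the multiset of distances from a vertex to the remaining $2k-1$ vertices is $\{1,1,2,2,\dots,k-1,k-1,k\}$, so
\[
s(v)=1+2\sum_{i=1}^{k-1}2^{i}+2^{k}=1+(2^{k+1}-4)+2^{k}=3\cdot 2^{k}-3=3(2^{k}-1).
\]
In $C_{2k-1}$ the distances are $\{1,1,2,2,\dots,k-1,k-1\}$, giving
\[
s(v)=1+2\sum_{i=1}^{k-1}2^{i}=1+(2^{k+1}-4)=2^{k+1}-3.
\]
For $Q^{n}$ the number of vertices at distance $i$ from a fixed vertex is $\binom{n}{i}$, so by the binomial theorem
\[
s(v)=\sum_{i=0}^{n}\binom{n}{i}2^{i}=(1+2)^{n}=3^{n}.
\]

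The only genuine obstacle is the upper-bound direction of the stacking theorem, i.e.\ that an arbitrary configuration of $\max_v s(v)$ pebbles can always be made to cover $G$. A self-contained argument proceeds by contradiction on a minimal uncoverable configuration, repeatedly transferring pebbles toward a well-chosen vertex without ever increasing the relevant weight, until one is reduced to the single-stack case treated above; for the specific families $C_n$ and $Q^n$ one could instead give direct inductive arguments, but invoking \cite{sjostrand} is by far the cleanest route.
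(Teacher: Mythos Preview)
The paper does not actually prove this theorem; it is stated with a citation to \cite{sjostrand} and no argument is given. Your proposal is therefore not being compared against any proof in the paper, but it is correct: invoking the stacking theorem $\gamma(G)=\max_v\sum_u 2^{d(u,v)}$ and computing the distance-generating sum on each vertex-transitive family is precisely the intended route, and your arithmetic for $C_{2k}$, $C_{2k-1}$, and $Q^n$ is accurate. Your sketched lower bound via the non-increasing weight $w(\psi)=\sum_u 2^{d(u,v)}\psi(u)$ is also fine, and you are right that the nontrivial content (the matching upper bound) is exactly what one quotes from \cite{sjostrand}.
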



Let $F_n$ be the friendship graph with $2n+1$  vertices and $3n$ edges, formed by the join of
$K_1$ with $nK_2$. 

\begin{theorem} 
	\begin{enumerate} 
		\item[(i)]
		The friendship graph $F_n$ is Class $1$. 
		\item[(ii)]
		$\pi_{2}(K_{n})=n+2.$
		\end{enumerate} 
	 \end{theorem}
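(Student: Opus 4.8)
\emph{Part (i).} The plan is to avoid computing $\pi(F_n)$ directly and instead combine two facts already available in the excerpt: every Class~$0$ graph is $2$-connected, and $\pi(G)\le |V(G)|+1$ whenever $\operatorname{diam}(G)=2$ (Pachter--Snevily--Voxman). First I would note that, for $n\ge 2$, the central vertex of $F_n$ (the copy of $K_1$ in the join $K_1+nK_2$) is a cut vertex, since deleting it leaves the $n$ pairwise-nonadjacent copies of $K_2$. Hence $F_n$ is not $2$-connected, so it is not Class~$0$, and therefore $\pi(F_n)\ge |V(F_n)|+1=2n+2$. Next, $\operatorname{diam}(F_n)=2$: two leaves in distinct blades are joined only through the center, and nothing is farther apart. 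Hence $\pi(F_n)\le |V(F_n)|+1=2n+2$, so $\pi(F_n)=2n+2$, i.e.\ $F_n$ is Class~$1$. (A self-contained alternative to the ``not Class~$0$'' step is to display an unsolvable configuration of size $2n+1$ with a leaf target $r$: put $3$ pebbles on some leaf lying in a blade other than $r$'s, one pebble on each of the remaining $2n-2$ leaves, and none on $r$ or on the center; one checks that at most one pebble can ever be moved onto the center, so $r$ stays unreachable. The case $n=1$ is excluded, as $F_1=K_3$ is Class~$0$.)

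\emph{Part (ii), lower bound.} To get $\pi_2(K_n,r)\ge n+2$ I would exhibit a configuration of size $n+1$ that is not $2$-fold $r$-solvable. Label the non-target vertices $v,u_1,\dots,u_{n-2}$ and put $\phi(v)=3$, $\phi(u_i)=1$ for all $i$, and $\phi(r)=0$; the size is $3+(n-2)=n+1$. The key is the invariant that, as long as $r$ holds at most one pebble, every reachable configuration has at most one non-target vertex with two or more pebbles. This holds initially (only $v$), and a move preserves it: the move must start at the unique active non-target vertex $w$ (as $r$ is inactive), after which $w$ has at most one pebble, while the single vertex it feeds had at most one pebble beforehand and hence at most two afterwards. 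Consequently the first move that lands a pebble on $r$ also empties the last active vertex and leaves none, so no further move is possible and $r$ ends with exactly one pebble. Thus $\phi$ is not $2$-fold $r$-solvable, giving $\pi_2(K_n,r)\ge n+2$ and hence $\pi_2(K_n)\ge n+2$.

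\emph{Part (ii), upper bound.} For the matching inequality I would take an arbitrary configuration $\phi$ of size $n+2$ and target $r$, and use a two-step pigeonhole. If $\phi(r)\ge 2$, we are done. Otherwise the $n-1$ vertices other than $r$ carry at least $n+1$ pebbles, so some $v_1\ne r$ has at least two pebbles; move two of them to $r$. If $\phi(r)$ was $1$, then $r$ now has two. If $\phi(r)$ was $0$, then $r$ now has one pebble and the $n-1$ non-target vertices carry $n$ pebbles, so some $v_2\ne r$ again has at least two pebbles; moving two of them to $r$ gives $r$ a second pebble. In every case $\phi$ is $2$-fold $r$-solvable, so $\pi_2(K_n,r)\le n+2$, and with the lower bound we conclude $\pi_2(K_n)=n+2$.

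\emph{Anticipated difficulty.} The only genuinely nontrivial step is the lower bound of (ii): excluding every possible consolidation of pebbles among the non-target vertices. The standard weight function $x\mapsto 2^{-\operatorname{dist}(x,r)}$ assigns the bad configuration weight $(n+1)/2\ge 2$ once $n\ge 3$, so it alone does not forbid two pebbles on $r$; the ``at most one active non-target vertex'' invariant (or an equivalent finite case analysis) is what is really needed. Everything else is routine, modulo small-case bookkeeping ($n=1$ in (i), $n=2$ throughout).
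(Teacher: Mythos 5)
Your proposal is correct, and in both parts it goes beyond what the paper actually writes down. For part (i) you take a genuinely different route: instead of exhibiting an unsolvable configuration, you observe that the centre of $F_n$ is a cut vertex, so $F_n$ is not $2$-connected, hence not Class $0$, which already forces $\pi(F_n)\ge 2n+2$; combined with the Pachter--Snevily--Voxman bound for diameter-$2$ graphs this pins down $\pi(F_n)=2n+2$. This is cleaner and avoids any case-checking, at the cost of invoking the (true, and quoted in the introduction) fact that Class $0$ graphs are $2$-connected. Your backup explicit configuration (three pebbles on a leaf in a foreign blade, one on every other leaf except the target, none on the centre) is in substance the paper's configuration, and your verification that the centre can absorb at most one pebble is the argument the paper leaves to a figure. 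For part (ii) you use the same extremal configuration as the paper for the lower bound, but you actually justify its unsolvability via the ``at most one active non-target vertex'' invariant, which the paper merely asserts; one small point is that the invariant as literally stated does not by itself give ``$w$ has at most one pebble after the move'' --- you also need that the unique active vertex never carries more than three pebbles, but this is preserved by the very same induction (a receiving vertex had at most one pebble, hence ends with at most two), so it is a presentational nit rather than a gap. Your upper bound is where you most clearly improve on the paper: the paper only verifies $2$-fold solvability for a single configuration of size $n+2$, whereas your two-step pigeonhole handles an arbitrary configuration, which is what the inequality $\pi_2(K_n)\le n+2$ actually requires. Your remarks on the excluded small cases ($n=1$ in (i), and $n\ge 2$ in (ii)) are also appropriate, since the statement is false for $F_1=K_3$.
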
 
	 \proof 
	 		\begin{enumerate} 
	 			\item[(i)]
	 	Note that $ \pi(F_{n})\leqslant 2n+2$. The configuration $C$ with  $C(r)=C(u)=0$, $C(v)=3$ and $ C(w)=1 $ for other vertices $w$  is $r$-unsolvable (see Figure \ref{1}) and so 
	 	$ \pi(F_{n})=2n+2$. 
	 		\item[(ii)]
	 		Suppose that $V(K_n)=\{v_1,v_2,...,v_n\}$ and consider an arbitrary target vertex $r$, say $v_n$. Let $C$ be a configuration with $n+2$ pebbles such that $C(v_1)=3, C(v_2)=1, C(V_3)=2$ and $C(v_i)=1$ for $4\leq i\leq n-1$. 	 	
	 		In any case one can put two  pebbles on the target vertex. So $ \pi_{2}(K_{n})\leqslant n+2$.
	 			 		Now, consider $r$-unsolvable configuration with $n+1$ pebbles so that three  pebbles on a vertex (except target vertex $r$) and one pebble on other vertices. Then target vertex receives a pebble exactly, so $ \pi_{2}(K_{n})>n+1$. \qed
	 		\end{enumerate} 
	 
\begin{figure}[ht]
\centering
\includegraphics[scale=1.07]{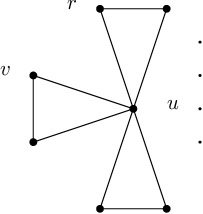}
\caption{The $r$-unsolvable configuration of the friendship graph. }\label{1}
\end{figure}

  \begin{figure}[ht]
  	\centering
  	\includegraphics[scale=1.07]{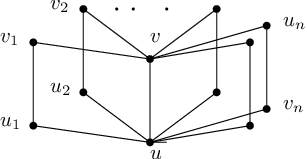}
  	\caption{The book graph $ B_{n}.$}\label{book}
  \end{figure}
  
 The $n$-book graph ($ n\geqslant 2 $) is defined as the Cartesian product $K_{1,n}\square P_{2}$. We call every $ C_{4} $ in the book graph $ B_{n} $, a page of $ B_{n} $. All pages in $ B_{n} $ have a common edge $ vu $ (see Figure \ref{book}). Here we compute the pebbling number of the book graph.

 \begin{figure}[ht]
 	\centering
 	\includegraphics[scale=1]{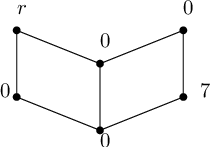}
 	\caption{The $r$-unsolvable  configuration of the graph $ B_{2} $.}\label{4}
 \end{figure}
 \begin{theorem}
 	\begin{enumerate} 
 		\item[(i)]
 			$ \pi(B_{2})=8. $
 		\item[(ii)] 
 		 	$\pi(B_{n})=2n+4$ for $ n\geqslant 2. $
\end{enumerate} 
 \end{theorem}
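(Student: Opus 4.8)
The plan is to fix notation, reduce to two target types using the symmetries of $B_n$, and then prove matching lower and upper bounds. Since $2\cdot 2+4=8$, statement (i) is the case $n=2$ of statement (ii), so it suffices to prove (ii), and I will indicate where the argument simplifies for $n=2$. Write $v,u$ for the two centres (the common edge of all pages), and for $1\le i\le n$ let the $i$-th page be the $4$-cycle $v\,a_i\,b_i\,u$, so that $a_i\sim v,b_i$ and $b_i\sim u,a_i$. Then $|V(B_n)|=2n+2$ and $\mathrm{diam}(B_n)=3$ (attained for instance by $d(a_i,b_j)=3$ with $i\ne j$), and $B_n$ is invariant under permutations of the pages and under the involution that swaps $v\leftrightarrow u$ together with every $a_i\leftrightarrow b_i$. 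Hence up to symmetry the target $r$ is either a centre, say $r=v$, or a leaf, say $r=b_n$.

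For the lower bound $\pi(B_n)\ge 2n+4$ I take $r=b_n$ and the configuration $C$ defined by $C(a_1)=7$, $C(a_i)=C(b_i)=1$ for $2\le i\le n-1$, and $C(x)=0$ for $x\in\{v,u,a_n,b_1,b_n\}$; for $n=2$ this is simply $7$ pebbles on $a_1$ (cf. \fref{4}). Its size is $7+2(n-2)=2n+3$, and I claim it is $r$-unsolvable. The only vertex carrying more than one pebble is $a_1$, whose pebbles can move only onto $v$ or onto $b_1$; since $a_1$ can make at most $3$ pebbling steps, it deposits at most $3$ pebbles onto $\{v,b_1\}$ in total, so $v$ and $b_1$ each end with at most $3$ pebbles and each can forward at most one pebble to a neighbour. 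A short check then shows $u$ and $a_n$ (both initially empty) never reach $2$ pebbles, and since $N(b_n)=\{u,a_n\}$ no pebble ever reaches $r$. (For $n=2$ one may argue even more directly: all $7$ pebbles lie at distance $3$ from $r$, so at most $\lfloor 7/2^{3}\rfloor=0$ of them reach $r$.) Hence $\pi(B_n)\ge\pi(B_n,b_n)\ge 2n+4$.

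For the upper bound it suffices to show $\pi(B_n,v)\le 2n+4$ and $\pi(B_n,b_n)\le 2n+4$. If $r=v$, I claim every $v$-unsolvable configuration $C$ has size at most $2n+1$: unsolvability forces $C(u)\le 1$ and $C(a_i)\le 1$ for all $i$; if some $C(b_i)\ge 2$ then the moves $b_i\to a_i$ and $b_i\to u$ force $C(a_i)=0$ and $C(u)=0$, and two pages with $C(b_i),C(b_j)\ge 2$ would deliver two pebbles to $u$, so at most one page is ``heavy'' and it then has $C(b_i)\le 3$; simple bookkeeping over the $2n+2$ vertices gives $|C|\le 2n+1<2n+4$. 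If $r=b_n$ --- the main case --- I claim every $b_n$-unsolvable $C$ has size at most $2n+3$. Here $C(b_n)=0$, $C(u)\le 1$, $C(a_n)\le 1$, and the pebbles on pages $1,\dots,n-1$ can reach $b_n$ only by being routed to $u$ (through the $b_i$'s and/or through $v$) or to $a_n$ (only through $v$). Letting $M$ be the largest pile one can assemble on $v$ --- allowing each page first to do $b_i\to a_i$ and then $a_i\to v$ --- unsolvability forces $M\le 1$ if $C(a_n)=1$ and $M\le 3$ if $C(a_n)=0$, while simultaneously what $v$ and the $b_i$'s together can push onto $u$ must stay below $2-C(u)$. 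Classifying pages by whether $C(a_i)\ge 2$ and bounding $C(a_i)+C(b_i)$ page by page under these inequalities shows that at most one page can be heavy, that a heavy page has $C(a_i)\le 7$ and then forces $C(v)=C(u)=C(a_n)=0$ and $C(b_i)=0$ on that page, and that every other page contributes at most $2$; the inequalities are exactly tight at the lower-bound configuration, so $|C|\le 2n+3$. Thus $\pi(B_n,b_n)\le 2n+4$, and altogether $\pi(B_n)=\max_r\pi(B_n,r)=2n+4$ for $n\ge 2$; specialising to $n=2$ recovers (i).

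The bookkeeping is elementary, with one genuinely delicate point in the leaf case: a heavy pile on some $a_i$ need not commit to a single route --- it can split its pebbles between $v$ and $b_i$, and the two streams can recombine on $u$ --- so the bound $|C|\le 2n+3$ must be verified against all such mixed routings, and the configurations attaining $2n+3$ must be pinned down to be, up to symmetry, the one used in the lower bound. I would organise the whole proof as: (1) reduce to $r\in\{v,b_n\}$ by symmetry; (2) dispose of $r=v$; (3) for $r=b_n$, derive the routing inequalities at $v$, at each $b_i$, and at $u$; (4) run the page-by-page bound. I expect steps (3)--(4), and in particular the mixed-routing bookkeeping, to be the only place that requires real care.
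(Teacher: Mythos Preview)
Your overall architecture --- reduce by symmetry to the two target types, exhibit an unsolvable configuration of size $2n+3$ for the leaf target, then bound the size of any unsolvable configuration --- is sound, and your lower-bound configuration is essentially the same as the paper's (seven pebbles concentrated on one outer vertex at distance~$3$ from the target, plus single pebbles elsewhere).

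The route differs from the paper in two respects. For part~(i) the paper does not derive it from~(ii); instead it observes that $C_6$ is a \emph{spanning} subgraph of $B_2$ (the $3\times 2$ grid has a Hamiltonian cycle), whence $\pi(B_2)\le\pi(C_6)=8$ immediately, and then exhibits a size-$7$ unsolvable configuration. This is a clean shortcut you might want to adopt. For part~(ii), on the other hand, the paper only exhibits the unsolvable configurations (of sizes $2n+1$ for a centre target and $2n+3$ for a leaf target) and simply asserts that they are ``maximal'', without any argument; your page-by-page bookkeeping is precisely what is missing there, and your identification of the mixed-routing issue (a heavy pile on some $a_i$ splitting between $v$ and $b_i$ and recombining at $u$) is the genuine content of the upper bound. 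So your approach is longer but actually complete where the paper's is not; if you want the best of both, use the $C_6$ trick for~(i) and keep your case analysis for~(ii).
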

 \begin{proof}
 		\begin{enumerate} 
 			\item[(i)]  
 			Since $ C_{6} $ is a subgraph $ B_{2} $, so $ \pi(B_{2})\leqslant\pi(C_{6}) $. Now we consider $ r $-unsolvable configuration with size $ 7 $ in Figure \ref{4}. Therefore we have the result.
 			
 			\item[(ii)]  
 			 	We consider two cases:
 	\begin{enumerate} 
 		\item[(1)]
 		Let the vertex $ v $ (or $ u $) in Figure \ref{book} be a target vertex $ r $, so $ r $ is located on the each page. Since $\pi(C_{4})=4 $ and both of vertices $u$ and $ v $ are common for pages, we have a following $r$-unsolvable configuration of the book graph $ B_{n} $ with maximum size $ 2n+1 $ by either putting one pebble on any vertex or putting three pebbles on the vertex and one pebble on every vertex except its neighbours. 
 		\item[(2)]
 		Suppose that the target vertex is not the vertices $ u $ and $ v $. We consider $r$-unsolvable configuration for the subgraph $ B_{2} $ (Part (i)) and put one pebble on the rest of vertices. This is a maximal $r$-unsolvable configuration with $ 2(n-2)+7 $. Therefore we have the result. \qed
 	\end{enumerate}	
 	\end{enumerate}     
 \end{proof}

Pebbling on graph products and other
 binary graph constructions studied in \cite{Asplund}.  A center of interest of graph pebbling is Graham's conjecture (for the pebbling number of Cartesian product of two graphs), which is proposed by Chung \cite{Chung}  and states that
 $\pi(G\Box H)\leq \pi(G)\pi(H)$. A  result concerning with this conjecture is proposed in the paper of Asplund,
 Hurlbert and Kenter \cite{Asplund} states that  $\pi(G\Box H)\leq 2\pi(G)\pi(H)$. A recent result in 
 \cite{TOC} 
 improves this upper bound to $\big(2-\frac{1}{min\{\pi(G),\pi(H)\}}\big)\pi(G)\pi(H)$.

 The corona product $G\circ H$ of two graphs $G$ and $H$ is defined as the graph obtained by taking one copy of $G$ and $\vert V(G)\vert $ copies of $H$ and joining the $i$-th vertex of $G$ to every vertex in the $i$-th copy of $H$. Asplund, Hurlbert and Kenter \cite{Asplund} presented an upper bound for the pebbling number of coronas as follows:
 
 \begin{theorem} {\rm \cite{Asplund}}
 	For two graphs $G, H$ of orders $g$ and $h$, respectively, we have
 	\[
 	\pi(G\circ H)\leq 4 \pi(G)+gh. 
 	\]
 	\end{theorem}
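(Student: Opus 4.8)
The plan is to reduce an arbitrary pebbling instance on $G\circ H$ to an instance on $G$ by ``collapsing'' each pendant copy of $H$ onto its attachment vertex. Fix a target $r$ and a configuration $\phi$ on $G\circ H$ of size $4\pi(G)+gh$; write $V(G)=\{v_1,\dots,v_g\}$, let $H_i$ be the $i$-th copy of $H$ (so $v_i$ is adjacent to every vertex of $H_i$), and put $a_i=\phi(v_i)$, $b_i=\sum_{x\in H_i}\phi(x)$. First, for each $i$ simultaneously I would perform every available pebbling step from a vertex of $H_i$ to $v_i$; this strands at most one pebble on each vertex of $H_i$ and leaves $c_i:=a_i+\sum_{x\in H_i}\lfloor\phi(x)/2\rfloor$ pebbles on $v_i$, and steps in distinct copies do not interfere.

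Next I would estimate the total pebble count after this collapse. Since at most $h$ pebbles of $H_i$ (one odd residue per vertex) fail to descend to $v_i$, we have $\sum_{x\in H_i}\lfloor\phi(x)/2\rfloor\ge(b_i-h)/2$, whence
\[
\sum_{i=1}^{g}c_i\ \ge\ \sum_i a_i+\frac12\Big(\sum_i b_i-gh\Big)\ =\ \frac12\sum_i a_i+\frac12\big(4\pi(G)+gh-gh\big)\ \ge\ 2\pi(G),
\]
using $\sum_i a_i+\sum_i b_i=4\pi(G)+gh$. So the configuration induced on $G$ (inside $G\circ H$) carries at least $2\pi(G)$ pebbles, and any further steps taken entirely within $G$ are still legal in $G\circ H$.

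Then I would split into two cases. If $r=v_k\in V(G)$, then $2\pi(G)\ge\pi(G)\ge\pi(G,v_k)$, so the induced configuration is $v_k$-solvable and we are done. If $r$ lies in a copy $H_k$, then, using the inequality $\pi_2(G)\le 2\pi(G)$ noted in the Introduction, the induced configuration is $2$-fold $v_k$-solvable; I would move two pebbles onto $v_k$ by steps inside $G$ and then make the single step $v_k\to r$. In either case $\phi$ is $r$-solvable, so $\pi(G\circ H)\le 4\pi(G)+gh$.

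The step needing real care is the accounting in the collapse: the additive term $gh$ exists precisely to pay for the at-most-$h$-per-copy stranded pebbles, so the inequality must be arranged so that this wastage is subtracted before $\pi(G)$ or $\pi_2(G)$ is invoked. The factor $4$ is not intrinsic to the problem — it arises only from needing two pebbles on $v_k$ in the pendant-target case, combined with the crude bound $\pi_2(G)\le 2\pi(G)$ — and in fact the same argument already yields the sharper bound $\pi(G\circ H)\le 2\pi_2(G)+gh$.
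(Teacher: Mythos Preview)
The paper does not give its own proof of this theorem: it is quoted from \cite{Asplund} as a known result, with no argument supplied. Hence there is nothing in the present paper to compare your proof against.

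That said, your argument is correct. The collapse of each pendant copy $H_i$ onto its hub $v_i$, the accounting $\sum_i c_i\ge A/2+2\pi(G)\ge 2\pi(G)$, and the two-case finish (direct solvability when $r\in V(G)$, and $2$-fold solvability at $v_k$ followed by one step $v_k\to r$ when $r\in H_k$) are all sound. Your remark that the same computation already yields $\pi(G\circ H)\le 2\pi_2(G)+gh$ is also correct and in fact anticipates the sharper statements the paper goes on to discuss. One cosmetic point: when $r\in H_k$ you might note explicitly that if $\phi(r)\ge 1$ the configuration is trivially $r$-solvable, so collapsing $H_k$ (and thereby possibly emptying $r$) is harmless; you implicitly rely on this but never say it.
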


 	Also, Pleanmani, Nupo and Worawiset in \cite{TOC} proposed an alternative bound which is related to the $2$-pebbling number:
 	
 	 \begin{theorem} {\rm \cite{TOC}}
 	 	For two graphs $G, H$ of orders $g$ and $h$, respectively, we have
 	 	\[
 	 	\pi(G\circ H)\leq 3 \pi_2(G)+gh. 
 	 	\]
 	 \end{theorem}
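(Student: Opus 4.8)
The target statement to prove is the bound $\pi(G\circ H)\le 3\pi_2(G)+gh$ from \cite{TOC}. The plan is to fix an arbitrary target vertex $r$ of $G\circ H$ and an arbitrary configuration $C$ of size $3\pi_2(G)+gh$, and show that $r$ is reachable. Write $V(G)=\{x_1,\dots,x_g\}$, let $H_i$ be the $i$-th copy of $H$, and let $N_i=\{x_i\}\cup V(H_i)$ be the closed neighbourhood that forms the $i$-th ``local cluster''. Every vertex of $G\circ H$ lies in exactly one cluster $N_i$, and any path between clusters must pass through the vertices of $G$, so $G$ acts as the ``backbone''. The first step is to collapse the pebbles in each cluster onto its backbone vertex $x_i$: from any leaf $v\in V(H_i)$ we can move $\lfloor \phi(v)/2\rfloor$ pebbles to $x_i$, so each cluster with total mass $m_i$ can deliver at least $\big\lceil (m_i - h)/2 \big\rceil$-ish pebbles to $x_i$ (being careful with the exact floor/ceiling bookkeeping — but only up to $h$ pebbles per cluster are ``wasted'' as leftovers on leaves).

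Next I would split into two cases according to where $r$ lives. \emph{Case 1: $r=x_j$ for some backbone vertex.} After the collapsing step we have a configuration $C'$ supported on $V(G)$ whose size is at least $3\pi_2(G)+gh$ minus the at-most-$gh$ wasted pebbles, hence $C'$ has size $\ge 3\pi_2(G)$; actually we want size $\ge \pi(G,r)$ on $G$ alone, and since $\pi(G)\le\pi_2(G)$ this is comfortably enough even accounting for the factor issues, so we can pebble to $x_j$ in $G$ and we are done. The factor $3$ (rather than $1$) is the slack that absorbs the division-by-two losses incurred when funneling leaf pebbles into the backbone: each pebble on the backbone that originated in a leaf cost a factor of $2$, and one extra copy handles parity/leftover effects. \emph{Case 2: $r\in V(H_j)$ is a leaf in cluster $j$.} Here the strategy is to route $2$ pebbles onto $x_j$ — the unique backbone neighbour of $r$ — and then make one final step onto $r$. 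So it suffices to place $2$ pebbles on $x_j$, i.e. to $2$-fold solve the target $x_j$ in the ``collapsed'' picture. This is exactly where $\pi_2(G)$ enters: we need the collapsed configuration on $V(G)$ to have size $\ge\pi_2(G)$ so that $x_j$ is $2$-fold solvable in $G$. But before collapsing we should first check whether cluster $j$ itself already has enough pebbles to put $2$ on $x_j$ locally (e.g. $4$ pebbles on a single leaf of $H_j$, or $C(x_j)\ge 2$, etc.); if not, cluster $j$ contributes little, and the remaining $g-1$ clusters plus the backbone must supply a size-$\pi_2(G)$ configuration after collapsing — which again works because we started with $3\pi_2(G)+gh$ and lose at most $gh$ to leaf-leftovers and at most a factor related to the halving, leaving $\ge\pi_2(G)$.

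The main obstacle, and the part that needs the most careful accounting, is making the case analysis airtight when cluster $j$ (the one containing or adjacent to $r$) is ``pebble-poor'': one must show that even in the worst distribution of the $gh$ potentially-wasted pebbles, and even after the factor-$2$ losses in the collapse, what reaches the backbone still has size at least $\pi_2(G)$ (in Case 2) or $\pi(G)$ (in Case 1). A clean way to organize this is: let $m_i$ be the mass of cluster $i$; we can deliver to $x_i$ at least $\sum$ of $\lfloor\cdot/2\rfloor$ over its leaves plus $C(x_i)$, and a short lemma shows the total delivered to the backbone is at least $(\text{size}-gh)/2 \ge (3\pi_2(G)+gh-gh)/2$, wait — this gives only $\tfrac32\pi_2(G)$, so in fact one should not halve \emph{everything}: pebbles already sitting on backbone vertices are not halved. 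The correct bookkeeping separates backbone mass $B=\sum C(x_i)$ from leaf mass $L$; the backbone receives $\ge B + (L-gh)/2$ pebbles, and since $B+L=3\pi_2(G)+gh$ one checks $B+(L-gh)/2\ge \pi_2(G)$ always (worst case $B=0$ gives $(3\pi_2(G))/2\ge\pi_2(G)$), which is exactly the needed slack; the factor $3$ is tight precisely because of this halving. I would also handle the degenerate small cases ($H$ edgeless, $g=1$) separately to avoid boundary issues, and invoke the ``weight function'' / ``tree strategy'' machinery only if the direct collapse argument gets stuck, which I do not expect it to.
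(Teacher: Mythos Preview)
The paper does not contain a proof of this theorem: it is quoted verbatim from \cite{TOC} and immediately followed by the authors' own (stronger) formula. So there is no in-paper argument to compare your proposal against. The only indirect route the paper offers is its next result, the claimed exact value $\pi(G\circ H)=gh+2(\pi_2(G)-1)$, whose proof proceeds by a very different idea---counting the size of a maximal $r$-unsolvable configuration---rather than by your ``collapse onto the backbone'' strategy.

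On its own merits your plan is sound and would yield a correct proof. The decisive step is the last paragraph: separating the total mass into backbone mass $B$ and leaf mass $L$, noting that the collapsed configuration on $V(G)$ has size at least $B+\max\{0,(L-gh)/2\}$, and then checking that with $B+L=3\pi_2(G)+gh$ this quantity is always at least $\pi_2(G)$ (the worst case $B=0$ giving exactly $3\pi_2(G)/2$). That single inequality handles both of your cases, since Case~1 only needs $\pi(G)\le\pi_2(G)$ pebbles on the backbone and Case~2 needs $\pi_2(G)$ pebbles so that $x_j$ is $2$-fold solvable, after which one final step reaches $r$. Your earlier cluster-by-cluster estimate ``$\lceil (m_i-h)/2\rceil$-ish'' is slightly off precisely because it forgets that pebbles already sitting on $x_i$ are not halved, but you catch and repair this yourself in the $B$/$L$ bookkeeping, so no gap remains. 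The degenerate cases you flag ($g=1$, or $L<gh$) are harmless: when $L<gh$ one has $B>3\pi_2(G)$ outright, and when $g=1$ the graph $G\circ H$ has diameter at most $2$ and the bound is slack.

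In short: there is nothing in the present paper to benchmark against, but your argument is correct and is essentially the natural proof one would expect to find in \cite{TOC}.
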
 
 	 Now, we propose a formula for $\pi(G\circ  H)$  which is related to the $2$-pebbling number.

\begin{theorem} 
	If $G$ and $H$ are  two graphs of order $g$ and $h$, respectively. Then 
\begin{center}
$ \pi(G\circ  H)=gh+2(\pi_{2}(G)-1).$ 
\end{center} 
\end{theorem}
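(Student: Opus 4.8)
The plan is to prove the two inequalities $\pi(G\circ H)\leq gh+2(\pi_2(G)-1)$ and $\pi(G\circ H)\geq gh+2(\pi_2(G)-1)$ separately.

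For the lower bound, I would exhibit an unsolvable configuration of size $gh+2(\pi_2(G)-1)-1$. Let $C_0$ be a configuration on $V(G)$ of size $\pi_2(G)-1$ that places fewer than $2$ pebbles on some target vertex $x\in V(G)$ (it exists by definition of $\pi_2$). Choose the target $r$ of $G\circ H$ to lie in the copy of $H$ attached to $x$. On $G\circ H$, place $2$ pebbles at each vertex $v$ of $G$ carrying $C_0(v)$ pebbles — more precisely, replace each pebble of $C_0$ by $2$ pebbles on the corresponding vertex of the $G$-copy, for a contribution of $2(\pi_2(G)-1)$; additionally place $1$ pebble on every vertex of every copy of $H$ except on $r$ itself, contributing $gh-1$; the total is $gh+2(\pi_2(G)-1)-1$. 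I then argue this is $r$-unsolvable: any pebbling sequence reaching $r$ must either move a pebble along an edge inside the copy of $H$ at $x$ (impossible to initiate since all those vertices hold only $1$ pebble, and feeding them requires first getting $2$ pebbles to $x$ in $G$) or move a pebble from $x$ to $r$ (requires $2$ pebbles on $x$). So the only way to solve is to accumulate $2$ pebbles on $x$ within the $G$-part; but halving the doubled configuration $2C_0$ (each move in the copies of $H$ that pushes a pebble up to its $G$-vertex is exactly a halving there, and a move within $G$ on doubled counts mirrors a move on $C_0$) shows we can get at most $2(\pi_2(G)-1)/$… — precisely, $2C_0$ is $x$-solvable to at most $2\lfloor(\text{max }t)/1\rfloor$, and since $C_0$ is not $2$-fold $x$-solvable, $2C_0$ places at most $3$ pebbles, hence... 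I need to be careful: the cleanest route is that $2C_0$ being $x$-solvable to $k$ pebbles implies $C_0$ is $\lceil k/2\rceil$-fold $x$-solvable, so $2C_0$ places at most $2\cdot 1+1=3<4$ on $x$; combined with the single pebbles scattered in $H$-copies giving negligible help, $r$ stays unreachable.

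For the upper bound, take any configuration $\phi$ on $G\circ H$ of size $gh+2(\pi_2(G)-1)$ and any target $r$. If $r$ lies in a copy of $H$ attached to $x\in V(G)$, I would first try the "local" solve: if the copy of $H$ plus $x$ already carries enough to reach $r$, done. Otherwise, fold the whole configuration down onto $G$: define $\psi$ on $V(G)$ by $\psi(v)=\phi(v)+\sum_{w\in H_v}\lfloor \phi(w)/2\rfloor$ (pushing everything from each copy into its root), and show that after removing the cheap scattered pebbles, $\psi$ has size at least $\pi_2(G)$ — this is where the arithmetic $gh+2(\pi_2(G)-1)$ is engineered: at most $h$ pebbles per copy are "wasted" as unpaired singletons (total $\leq gh$ waste across all $g$ copies, but one has to account for the copy containing $r$ and the root $x$ separately), leaving $\geq 2(\pi_2(G)-1)$ pebbles that fold into $\geq \pi_2(G)-1+1=\pi_2(G)$ pebbles on $G$ after halving… the bookkeeping must yield $\psi$ of size $\geq\pi_2(G)$, hence $\psi$ is $2$-fold $x$-solvable, delivering $2$ pebbles to $x$, which then step to $r$. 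The case $r\in V(G)$ is easier and similar (need only $1$ pebble, so $\pi(G)\leq \pi_2(G)$ suffices with room to spare).

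The main obstacle I anticipate is the precise accounting in the upper bound: carefully tracking how many pebbles in each copy $H_v$ are lost to parity when folding, handling the copy $H_x$ containing $r$ (where we do not want to fold toward $x$ but rather toward $r$) and the root vertex $x$ itself, and verifying that in every split of the "budget" $gh+2(\pi_2(G)-1)$ either the local solve at $r$ works or the folded configuration on $G$ reaches size $\pi_2(G)$. A clean way to organize this is: let $a=\sum_{w\in H_x}\phi(w)$ be the pebbles in $r$'s copy and $b$ the rest; if $a$ is large the local argument (using $\pi(\,\cdot\,)$ of a star-like graph, or just that $H_x$ joined to $x$ has small diameter) finishes, and if $a$ is small then $b$ is close to the full budget and folds to $\geq\pi_2(G)$ on $G$. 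I would also double-check the boundary interplay $\pi(G)\leq\pi_2(G)\leq 2\pi(G)$ to make sure the formula is consistent with known small cases such as $G=K_g$, where $\pi_2(K_g)=g+2$ gives $\pi(K_g\circ H)=gh+2(g+1)$, and test it against $G=K_1$ (so $G\circ H=K_1\vee H$) as a sanity check.
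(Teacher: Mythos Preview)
Your lower-bound construction does not work. The crux is the claim that if $C_0$ is not $2$-fold $x$-solvable then the doubled configuration $2C_0$ cannot place $2$ pebbles on $x$; equivalently, that ``$2C_0$ places $k$ on $x$ $\Rightarrow$ $C_0$ is $\lceil k/2\rceil$-fold $x$-solvable.'' This implication is false. Doubling commutes with pebbling moves in one direction only: from a sequence for $C_0$ you get one for $2C_0$, not conversely, because odd piles in $C_0$ become even in $2C_0$ and suddenly allow moves. Concretely, take $G=P_3$ with vertices $a\!-\!b\!-\!c$, target $x=c$. Here $\pi_2(P_3)=8$ and the \emph{only} configuration of size $7$ that is not $2$-fold $c$-solvable is $C_0=(7,0,0)$. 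But $2C_0=(14,0,0)$ sends $7$ to $b$ and then $3$ to $c$, so it is even $3$-fold $c$-solvable. Hence your configuration on $P_3\circ K_1$ (namely $14$ on $a$, $1$ on $a'$, $1$ on $b'$, target $r=c'$) has size $16=gh+2(\pi_2(G)-1)-1$ yet is $r$-solvable. A construction that \emph{does} give an unsolvable configuration of the right size in this example is to push the mass into the $H$-copies rather than onto $V(G)$: e.g.\ $15$ on $a'$ and $1$ on $b'$. In general, the fix is to place $2C_0(v)+1$ pebbles on a single vertex of $H_v$ (and $1$ on the remaining $H$-vertices, $0$ on $r$ and on $V(G)$), so that folding each $H_v$ onto $v$ recovers \emph{exactly} $C_0$ on $G$, not $2C_0$.

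Compared with the paper: the paper's argument is essentially the same two-case skeleton you set up (target in $G$ versus target in a copy of $H$, the latter forcing two pebbles on the attaching vertex, hence the appearance of $\pi_2(G)$), but it simply \emph{asserts} the maximum unsolvable sizes in each case without exhibiting a configuration or doing the folding bookkeeping. Your upper-bound outline via the folded configuration $\psi$ is in the same spirit and is more explicit than what the paper writes. One further word of caution that your own sanity check already flags: for $G=K_1$ and $H=K_h$ one has $G\circ H=K_{h+1}$, so $\pi(G\circ H)=h+1$, whereas the formula gives $h+2$. The discrepancy comes from edges \emph{inside} $H$, which let you reach $r$ without ever putting two pebbles on the attaching $G$-vertex; both the paper's argument and any purported tight lower bound must contend with this.
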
 
\proof 
Let $r'$ be an arbitrary vertex in the  graph $G$. We need at least $\pi(G)$ pebbles for $r'$-solvable configuration. Every vertex of $G$ is connected to all vertices of a copy of $H$. If the vertices of the graph $H$ have at least two pebbles, then supporting pebbles enter the graph $G$, so the maximum number of pebbles that gives an $r'$-unsolvable configuration is equal to $ gh+2(\pi(G)-1)$. Now, if the target vertex $r$ is located in one of the copies of $H$, then the vertex $r $ must be $2$-solvable so that the vertex $r$ takes one pebble. So in this case, the maximum number of pebbles that have an $r$-unsolvable configuration is equal to $ (gh-1)+2(\pi_{2}(G)-1)$. Since $ \pi_{2}(G)>\pi(G) $, so
\begin{equation*}
\max {\lbrace gh+2(\pi(G)-1), (gh-1)+2(\pi_{2}(G)-1)\rbrace }=(gh-1)+2(\pi_{2}(G)-1)
\end{equation*}
which result is $ r $-solvable configuration by adding one pebble.\qed

\begin{corollary} 
For any graph $H$ of order $h$, $\pi(K_{n}\circ H)=nh+2n+2.$
\end{corollary}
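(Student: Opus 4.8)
The plan is to apply the preceding theorem, which gives $\pi(G\circ H)=gh+2(\pi_2(G)-1)$, with $G=K_n$. To do this I need the value of $\pi_2(K_n)$, which is supplied by part (ii) of the earlier theorem on the friendship graph and complete graphs, namely $\pi_2(K_n)=n+2$. Substituting $g=n$, $h=h$, and $\pi_2(K_n)=n+2$ into the formula yields
\[
\pi(K_n\circ H)=nh+2\big((n+2)-1\big)=nh+2(n+1)=nh+2n+2,
\]
which is exactly the claimed identity. So the corollary is a direct one-line substitution once both ingredients are in hand.

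First I would state that by the theorem we have $\pi(G\circ H)=gh+2(\pi_2(G)-1)$ for any graphs $G,H$; this is the main engine. Second, I would recall from the theorem on $F_n$ and $K_n$ that $\pi_2(K_n)=n+2$. Third, I would specialize the corona formula to $G=K_n$, so that $g=|V(K_n)|=n$ and $\pi_2(G)=n+2$, and carry out the arithmetic above. No figure or auxiliary construction is needed, and there is no case analysis.

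The only thing to be careful about is the bookkeeping of constants: one must make sure the "$-1$" inside $\pi_2(G)-1$ and the "$+2$" in $\pi_2(K_n)=n+2$ are combined correctly to give $2(n+1)=2n+2$ rather than, say, $2n+4$ or $2n$. Since the underlying theorem is quoted as established, I expect essentially no obstacle here; the corollary is a sanity-check instantiation of the general result, and the "proof" will be two or three sentences of substitution. If one wished to be thorough one could also double-check consistency with known small cases (for instance, the order of $K_n\circ H$ is $n+nh=n(h+1)$, and $nh+2n+2\ge n(h+1)$ as it must be), but this is optional.

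\begin{proof}
By the previous theorem, for any graphs $G$ and $H$ of orders $g$ and $h$ we have $\pi(G\circ H)=gh+2(\pi_2(G)-1)$. Taking $G=K_n$ gives $g=n$, and by part (ii) of the earlier theorem $\pi_2(K_n)=n+2$. Hence
\[
\pi(K_n\circ H)=nh+2\big((n+2)-1\big)=nh+2(n+1)=nh+2n+2. \qedhere
\]
\end{proof}
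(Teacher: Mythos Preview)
Your proof is correct and is exactly the intended derivation: the paper states this corollary immediately after the formula $\pi(G\circ H)=gh+2(\pi_2(G)-1)$ without giving a separate proof, so the argument is precisely the substitution $g=n$, $\pi_2(K_n)=n+2$ that you carry out.
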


The neighbourhood corona product $ G\star H $ of two graphs $ G $ and $ H $ is defined as the graph obtained by taking one copy of $ G $ and $\vert V(G)\vert $ copies of $H$ and joining the neighbours of the $i$-th vertex of $G$
to every vertex in the $i$-th copy of $ H $  (see e.g., \cite{neighber}).

Since the corona product $ G\circ H $ is an spanning subgraph of the neighbourhood corona product $ G\star H $, so we have the following proposition:

\begin{proposition}
For any two graphs $G$ and $H$,  $ \pi(G\star H)\leqslant\pi(G\circ H) $.
 \end{proposition}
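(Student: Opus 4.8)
The plan is to obtain the proposition as an instance of a general monotonicity fact for pebbling: enlarging the edge set of a graph (on a fixed vertex set) never increases its pebbling number. So the first step is to isolate and prove the following lemma: \emph{if $G_1$ is a spanning subgraph of $G_2$, then $\pi(G_2,r)\le\pi(G_1,r)$ for every vertex $r$, and hence $\pi(G_2)\le\pi(G_1)$} — and the very same argument works verbatim for the $t$-fold pebbling number $\pi_t$. Given the observation stated just before the proposition — that $G\circ H$ is a spanning subgraph of $G\star H$ — applying the lemma with $G_1=G\circ H$ and $G_2=G\star H$ then finishes the proof immediately.

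For the lemma, the key point is that a pebbling step is a purely local operation along a single edge. Fix a target $r$ and let $V$ denote the common vertex set of $G_1$ and $G_2$. Take any configuration $\phi$ on $V$ that is $r$-solvable in $G_1$, and fix a sequence of pebbling steps in $G_1$ that lands a pebble on $r$; since that sequence uses only edges of $G_1\subseteq G_2$, it is also a legal sequence of pebbling steps in $G_2$ with exactly the same effect, so $\phi$ is $r$-solvable in $G_2$ as well. Thus every configuration that is $r$-solvable in $G_1$ is $r$-solvable in $G_2$; in particular, if every configuration of a given size is $r$-solvable in $G_1$, the same is true in $G_2$, and taking that size to be $\pi(G_1,r)$ gives $\pi(G_2,r)\le\pi(G_1,r)$. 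Maximising over $r\in V$ yields $\pi(G_2)\le\pi(G_1)$.

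Putting these together proves the proposition. I expect the genuinely content-bearing step to be not the pebbling argument — that is the routine monotonicity above — but the combinatorial step already invoked in the sentence preceding the statement: one should spell out the identification of the vertices of $G\circ H$ with those of $G\star H$ (the copy of $G$ is common, and the copy of $H$ attached to $v_i$ in the corona is the same copy joined to $N(v_i)$ in the neighbourhood corona), and then verify, for the conventions being used for these two products, that under this identification every edge of $G\circ H$ is an edge of $G\star H$. Once that inclusion is in hand there is no further obstacle. (As usual one assumes $G$ is connected so that both pebbling numbers are finite; otherwise the inequality holds trivially.)
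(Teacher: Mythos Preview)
Your monotonicity lemma is correct and is exactly the mechanism the paper has in mind: the paper's entire ``proof'' is the single sentence asserting that $G\circ H$ is a spanning subgraph of $G\star H$, from which the inequality is meant to follow. So your approach matches the paper's. The problem is that this spanning-subgraph claim is false in general, and your proposed verification of it does not go through.

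Under the natural identification you describe --- the copy $H_i$ attached to $v_i$ in the corona is the same $H_i$ in the neighbourhood corona --- the cross edges do \emph{not} line up: in $G\circ H$ each vertex of $H_i$ is adjacent to $v_i$, while in $G\star H$ each vertex of $H_i$ is adjacent only to $N_G(v_i)$, and $v_i\notin N_G(v_i)$ in a simple graph. Relabelling the copies cannot always repair this: a bijection $\sigma$ sending $H_i$ to $H_{\sigma(i)}$ makes the corona a spanning subgraph of the neighbourhood corona precisely when $v_i v_{\sigma(i)}\in E(G)$ for every $i$, i.e.\ when $G$ admits a fixed-point-free permutation mapping each vertex to a neighbour. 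For $G=P_3$ no such $\sigma$ exists (two leaves would both have to map to the centre), and one can check directly that $P_3\circ K_1$, which is the spider with arm lengths $(2,2,1)$, is not isomorphic to any spanning subgraph of $P_3\star K_1$ (a $4$-cycle with two pendants at a single vertex, whose spanning trees have arm lengths $(3,1,1)$ or a vertex of degree~$4$). Thus the very step you flagged as the ``content-bearing'' one is where the argument breaks; the inequality does not follow from the spanning-subgraph route, and the paper's one-line justification carries the same defect.
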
 
 In the following theorem, we show that the neighbourhood corona product $ K_{3}\star K_{m}$ is class 1. 
 \begin{theorem}
 	The neighbourhood corona $ K_{3}\star K_{m}$ is Class $ 1 $. 
 \end{theorem}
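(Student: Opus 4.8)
The plan is to squeeze $\pi(K_3\star K_m)$ between $n+1$ from above (via the diameter‑two bound of Pachter, Snevily and Voxman quoted above) and $n+1$ from below (via an explicit unsolvable configuration of size $n$), where $n=|V(K_3\star K_m)|=3m+3$. Write $v_1,v_2,v_3$ for the vertices of $K_3$ and $H_1,H_2,H_3$ for the three copies of $K_m$, with $H_i$ joined to the neighbours $\{v_j:j\neq i\}$ of $v_i$. First I would check that ${\rm diam}(K_3\star K_m)=2$: a vertex of $H_i$ fails to be adjacent only to $v_i$ and to the vertices of the other two copies, and in each of these cases there is a common neighbour among $v_1,v_2,v_3$. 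Hence $\pi(K_3\star K_m)\le n+1$, and since the graph has diameter $2$ it is Class $1$ as soon as some target admits an unsolvable configuration of size $n$.

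For the lower bound, fix the target $r$ to be any vertex of $H_1$ and let $C$ be the configuration that places $3$ pebbles on one chosen vertex $a\in H_2$, $3$ pebbles on one chosen vertex $b\in H_3$, exactly one pebble on every remaining vertex of $H_1\cup H_2\cup H_3$, and no pebbles on $v_1,v_2,v_3$ (nor on $r$); its size is $3+3+3(m-1)=n$. Now I would show $C$ is $r$‑unsolvable. The neighbours of $r$ are $v_2$, $v_3$ and $H_1\setminus\{r\}$, so it suffices to prove that in every configuration reachable from $C$ none of these vertices ever carries two pebbles. Since $H_1\setminus\{r\}$ carries all ones and has no neighbour outside $H_1$ except $v_2$ and $v_3$, the whole copy $H_1$ stays frozen as long as $v_2,v_3$ keep at most one pebble; so everything reduces to the claim: \emph{starting from $C$, neither $v_2$ nor $v_3$ can ever reach two pebbles.}

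To prove this I would look at the first moment at which $v_2$ or $v_3$ acquires a second pebble — say $v_3$, by the automorphism exchanging $v_2\leftrightarrow v_3$ and $H_2\leftrightarrow H_3$. Up to that moment $H_1$ is frozen and $v_2$ never pushes, so all activity lives on $\{v_1,v_2,v_3\}\cup H_2\cup H_3$, and two bookkeeping facts finish it. (1) For a clique $H$ the quantity $\sum_{x\in H}\lfloor\phi(x)/2\rfloor$ drops by exactly $1$ each time a pebble leaves $H$, rises by at most $1$ each time one enters, and never rises on a move internal to $H$; since this quantity equals $1$ for $C$ restricted to $H_2$ and to $H_3$, each of these cliques exports at most one more pebble than it imports. (2) The vertex $v_1$ starts empty, hence can push out at most half of what it receives, and within the window it receives only from $H_2$ and $H_3$. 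Because $H_2$ is joined only to $v_1,v_3$ and $H_3$ only to $v_1,v_2$ (and $v_3$ is not fed by $H_3$), combining (1) and (2) forces the number of pebbles ever delivered onto $v_3$ to be at most $1$, contradicting its reaching $2$. (When $m=1$ the set $H_1\setminus\{r\}$ is empty and the same argument runs verbatim.)

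The delicate point is exactly steps (1)–(2): there is genuine two‑way traffic between $v_1$ and the cliques $H_2,H_3$ — a pebble pushed out of a clique onto $v_1$ can be pushed back in and re‑exported — so a single distance‑based weight function is useless here; indeed the weight toward $r$ in configuration $C$ is already $m+\tfrac12\ge1$, and for $m\ge3$ even $H_2$ alone (which is adjacent to $v_3$) carries weight $>2$ toward $v_3$. The remedy is to track each clique's imports and exports separately through the floor‑potential and to bound the throughput of the one nonempty cut vertex $v_1$; this closes the dynamics into a small system of linear inequalities whose infeasibility is the heart of the proof.
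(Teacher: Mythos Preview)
Your approach matches the paper's exactly: the same diameter-two upper bound via Pachter--Snevily--Voxman and the same explicit $r$-unsolvable configuration of size $3m+3$ (three pebbles on one vertex of each of $H_2,H_3$, one pebble on every other vertex of $H_1\cup H_2\cup H_3$, zero on $r$ and on $v_1,v_2,v_3$). The paper, however, merely asserts unsolvability (appealing to a figure for $m=2$), whereas you supply the floor-potential and cut-vertex bookkeeping that actually justifies it; that verification is correct and strictly more complete than the published argument.
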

 \begin{proof}
 	Since $ {\rm diam}(K_{3}\star K_{m})=2$, so we consider the following $ r $-unsolvable configuration that the vertex $ r $ is a target vertex in $i$-th copy of $K_{m}$. Put three  pebbles on one of the vertices of any  copy of $ K_{m} $ (except $i$-th copy) and put one  pebble on any other vertex except vertices of $ K_{3}$. We will have an $ r $-unsolvable configuration with $ 3m+3 $ pebbles (see Figure \ref{2}). So we have the result.  
 \end{proof}
 
 \begin{figure}[ht]
 	\centering
 	\includegraphics[scale=1.01]{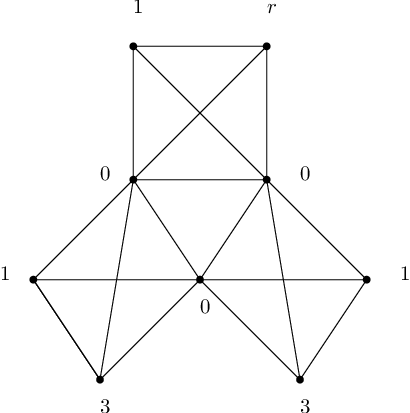}
 	\caption{\label{2} The $r$-unsolvable configuration of the  graph $ K_{3}\star K_{2} $.} 
 \end{figure}
 
 	Since $ K_{3}\star H $ is a spanning subgraph with diameter two of $ K_{3}\star K_{m}$ (where $ H $ is spanning subgraph of $K_{m}$), so we have the following corollary. 
 \begin{corollary} 
 	$K_{3}\star H $ is class 1.
  \end{corollary}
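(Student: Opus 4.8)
The plan is to exploit the corollary immediately preceding this statement together with the proposition that $\pi(G\star H)\leq \pi(G\circ H)$. Concretely, $K_{3}\star H$ is a spanning subgraph of $K_{3}\star K_{m}$ whenever $H$ is a spanning subgraph of $K_{m}$ (take $m=|V(H)|$), and since adding edges never increases the pebbling number, $\pi(K_{3}\star H)\leq \pi(K_{3}\star K_{m})$. Because $K_{3}\star K_{m}$ has diameter two (every vertex of a copy of $K_{m}$ is adjacent to two vertices of $K_{3}$, hence within distance two of everything), so does $K_{3}\star H$ — adding edges cannot increase the diameter beyond two, and it is at least two because $H$ need not be complete and the copies are non-adjacent to one another. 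Thus by the Pachter--Snevily--Voxman theorem quoted in the introduction, $\pi(K_{3}\star H)\leq |V(K_{3}\star H)|+1$, so it suffices to rule out Class $0$.

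The main content is therefore to exhibit, for an arbitrary $H$ of order $h$, an $r$-unsolvable configuration of size $|V(K_{3}\star H)|=3h+3$, which forces $\pi(K_{3}\star H)=3h+3+1$, i.e. Class $1$. I would mimic exactly the construction in the proof of the preceding theorem: pick the target $r$ in the $i$-th copy of $H$, place three pebbles on a single vertex lying in a copy of $H$ other than the $i$-th one, and one pebble on every remaining vertex except the three vertices of the $K_{3}$ (which get zero). This uses $3+(3h+3-1-3)=3h+2$ pebbles; adding one more pebble somewhere harmless (e.g.\ a second pebble would be needed, so instead one checks the count: we want exactly $3h+3$). I would recount carefully: vertices total $3h+3$; give $0$ to each of the $3$ vertices of $K_{3}$ and $0$ to $r$, that leaves $3h-1$ vertices; put $3$ on one of them and $1$ on the other $3h-2$, for a total of $3h-2+3=3h+1$; to reach size $3h+3$ one instead allows two such "heavy" vertices or bumps a few $1$'s — the cleanest is to give $3$ pebbles to one vertex of a non-$i$-th copy and $2$ pebbles to one vertex of $K_3$? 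No — $K_3$-vertices must stay at $0$ or they relay. The honest fix: exactly as in the previous proof the size is $3m+3$ only after we re-examine which vertices may safely hold pebbles.

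The real obstacle, and the step I would spend care on, is verifying unsolvability: one must show that from such a configuration no pebbling sequence puts a pebble on $r$. The key structural facts are that in $K_{3}\star H$ the vertices of the central $K_{3}$ are the only "hubs" — every copy of $H$ sees exactly two of them — and that a lone vertex of $K_{3}$ holding $0$ pebbles can acquire at most $\lfloor k/2\rfloor$ pebbles from $k$ pebbles sitting on its neighbours, which then must be split across the two copies it feeds. I would argue that the three pebbles on the off-copy vertex can push at most one pebble onto a $K_3$-vertex; the scattered single pebbles are inert (no vertex outside $K_3$ with a single pebble can move); hence the $i$-th copy, and in particular $r$, never receives anything. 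I would also double-check that the target being inside a copy of $H$ (rather than on $K_{3}$) is the worst case, just as in the preceding theorem, so the bound $3h+3$ is attained. Granting these checks, unsolvability of a size-$(3h+3)$ configuration combined with the diameter-two upper bound gives $\pi(K_{3}\star H)=3h+4>|V(K_{3}\star H)|$, so $K_{3}\star H$ is Class $1$, as claimed; alternatively, and more briefly, one simply cites the preceding theorem together with the spanning-subgraph proposition, which already delivers $\pi(K_{3}\star H)\leq \pi(K_{3}\star K_{m})=3m+4$ and the same lower-bound configuration (it lives in $K_3\star H$ since it uses no edges of $H$ at all), finishing the proof.
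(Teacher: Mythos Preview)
Your overall plan matches the paper's one-line argument (spanning-subgraph comparison to $K_3\star K_m$ plus the diameter-two bound of Pachter--Snevily--Voxman), but you have the monotonicity backwards, and that reversal propagates through the whole write-up. If $K_3\star H$ is a spanning subgraph of $K_3\star K_m$, then it is $K_3\star K_m$ that has \emph{more} edges; ``adding edges never increases the pebbling number'' therefore yields
\[
\pi(K_3\star K_m)\ \le\ \pi(K_3\star H),
\]
not the inequality you wrote. The same slip recurs in your diameter argument: going from $K_m$ to $H$ \emph{removes} edges, so ``adding edges cannot increase the diameter'' is not the relevant direction. (Your parenthetical observation that every vertex of a copy of $H$ is adjacent to two vertices of $K_3$ is the correct, self-contained reason that $\mathrm{diam}(K_3\star H)=2$.)

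Once you fix the direction, the argument is immediate and there is no need for the long attempt to hand-build an unsolvable configuration of the right size: from the preceding theorem $\pi(K_3\star K_m)=3m+4$, so spanning-subgraph monotonicity gives $\pi(K_3\star H)\ge 3m+4$; since $\mathrm{diam}(K_3\star H)=2$, Pachter--Snevily--Voxman gives $\pi(K_3\star H)\le 3m+4$. Hence $\pi(K_3\star H)=|V(K_3\star H)|+1$ and $K_3\star H$ is Class~$1$. Your closing parenthetical --- that the unsolvable configuration from the $K_m$ case already lives in $K_3\star H$ because it uses no edges of $H$ --- is correct and is another way to see the lower bound, but it is redundant once the monotonicity is stated the right way.
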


  \begin{theorem}
  	The neighbourhood corona $ K_{n}\star G$ is Class $ 0 $ for $ n\geqslant 4$. 
  \end{theorem}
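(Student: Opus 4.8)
The plan is to show that for $n \geq 4$, the neighbourhood corona $K_n \star G$ has pebbling number equal to its order, i.e. $\pi(K_n \star G) = n(1 + |V(G)|)$. Since $K_n \star G$ contains $K_n \star K_m$ as a spanning subgraph (for $m = |V(G)|$, treating $G$ as a spanning subgraph of $K_m$), and the pebbling number is monotone under adding edges, it suffices to handle the case $G = K_m$; then $\pi(K_n \star K_m) \leq \pi(K_n \star G)$, but also any Class $0$ certificate for $K_n \star K_m$ must in fact come with the matching lower bound $\pi \geq |V|$ that holds for every graph, so I would prove $\pi(K_n \star K_m) = n(m+1)$ directly and invoke monotonicity plus the trivial lower bound for the general $G$. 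Write $N = n(m+1)$ for the order.

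First I would record the trivial lower bound $\pi(K_n \star G) \geq N$, witnessed by the all-ones-minus-target configuration. The work is the upper bound: take any configuration $C$ of size $N$ on $K_n \star K_m$ and any target $r$, and show $r$ is reachable. I would split on the location of $r$. \textbf{Case 1: $r$ lies in the $K_n$ part.} Here $r$ has eccentricity $2$ (its distance to a copy-vertex not attached to it is $2$, since in the neighbourhood corona the $i$-th copy is joined to the \emph{neighbours} of the $i$-th vertex, and for $K_n$ all other $n-1$ vertices are neighbours, so actually every copy-vertex is within distance $2$ and the $K_n$-vertices are within distance $1$). If any vertex at distance $1$ from $r$ holds $\geq 2$ pebbles, done; if some vertex $w$ at distance $2$ holds $\geq 4$, done. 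Otherwise every distance-$1$ vertex holds $\leq 1$ and every distance-$2$ vertex holds $\leq 3$. Count: $r$ holds some amount $a$; there are $n-1$ vertices at distance $1$ (the rest of $K_n$ is distance $1$, plus the copy attached to $r$... one must be careful which copy-vertices are at distance $1$). The counting must show the cap forces size $< N$ unless $r$ is already solvable, and here I would use that there are enough ``distance-$1$'' slots (capacity $1$) and ``distance-$2$'' slots (capacity $3$) so that the total capacity of an unsolvable configuration is $N - 1$; the key identity is that $n$ is large enough ($n \geq 4$) that the contributions balance. \textbf{Case 2: $r$ lies in the $i$-th copy of $K_m$.} Then $r$ is adjacent to all $n-1$ neighbours of $v_i$ in $K_n$ and to the other $m-1$ vertices of its own copy, so $r$ has many distance-$1$ neighbours; the far vertices (the other copies, and $v_i$ itself) are at distance $2$. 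Again cap distance-$1$ vertices at $1$ pebble and distance-$2$ vertices at $3$, and show the total is $\leq N-1$.

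\textbf{The main obstacle} I expect is the bookkeeping in the weight-function / capacity argument: correctly identifying, for each choice of $r$, exactly which vertices sit at distance $1$ versus distance $2$ (the neighbourhood-corona adjacency is subtle — a copy vertex is adjacent to $v_j$ for $j \neq i$ but \emph{not} to $v_i$), and then checking that the ``maximum unsolvable weight'' $\sum (\text{cap of distance-1 vertices}) \cdot 1 + \sum(\text{cap of distance-2 vertices})\cdot 3$, together with the pebbles we may freely leave on $r$, comes out to exactly $N-1$ and not more. This is where $n \geq 4$ is used: for $n = 3$ the count is off by the amount reflected in the previous theorem (where $3m+3 = N - 1 + \text{extra}$ made it Class $1$), so I would double-check the arithmetic against the $n=3$ computation to make sure the threshold is sharp. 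A clean way to organize Case 1 and Case 2 uniformly is to observe $\mathrm{diam}(K_n \star K_m) = 2$ and apply a Pachter–Snevily–Voxman-style argument, but with the sharper bound $n+1$ replaced by $n$ — which requires exploiting the high connectivity of $K_n \star K_m$ for $n \geq 4$ (it is at least... $2$-connected, in fact more), matching the remark in the introduction that Class $0$ diameter-two graphs are $2$-connected, and showing the specific structure here actually achieves the optimum.
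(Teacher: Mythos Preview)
Your reduction step is in the wrong direction. Since $G$ is a spanning subgraph of $K_m$, the graph $K_n\star G$ is a spanning subgraph of $K_n\star K_m$ (not the other way around), so monotonicity gives $\pi(K_n\star K_m)\le \pi(K_n\star G)$. Proving $\pi(K_n\star K_m)=N$ therefore recovers only the trivial lower bound $\pi(K_n\star G)\ge N$; it says nothing about the needed upper bound. You must argue directly for arbitrary $G$.

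More seriously, the capping argument you sketch is too weak to close the upper bound even for $K_n\star K_m$. Take $r=v_i\in K_n$. Then every $v_j$ ($j\ne i$) and every vertex of every copy $G_j$ ($j\ne i$) is at distance $1$ from $r$, while the $m$ vertices of $G_i$ are at distance $2$. Your caps give at most
\[
(n-1)(m+1)\cdot 1 + m\cdot 3 \;=\; n(m+1)-1 + 2m \;=\; N-1+2m,
\]
which exceeds $N-1$ whenever $m\ge 1$. The same overshoot occurs (worse) when $r$ lies in a copy. The point is that ``$\le 1$ on each neighbour and $\le 3$ on each distance-$2$ vertex'' is necessary for unsolvability but far from sufficient: several distance-$2$ vertices carrying $2$ or $3$ pebbles can cooperate through a common neighbour of $r$, and your count ignores this.

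The paper exploits exactly that cooperation. The key structural fact (and the place $n\ge 4$ enters) is that \emph{any three vertices of $K_n\star G$ have a common neighbour in $K_n$}: each vertex fails to be adjacent to at most one $v_j$, so three vertices exclude at most three of the $n\ge 4$ central vertices. Hence if two distinct vertices each carry $\ge 2$ pebbles, route one pebble from each to a common neighbour of those two and $r$, then on to $r$. Combined with ``$\ge 4$ anywhere solves $r$'' (diameter $2$), the only remaining unsolved case is: exactly one vertex has $2$ or $3$ pebbles and all others have at most $1$. A size-$N$ configuration of this shape leaves at most two vertices empty, one of which is $r$; now the loaded vertex and $r$ have at least $n-2\ge 2$ common neighbours in $K_n$, at least one of which carries a pebble, and two moves reach $r$. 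This common-neighbour mechanism is the missing idea in your plan.
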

  \begin{proof}
  	Let $C$ be an arbitrary configuration of the graph $ K_{n}\star G $ with size of $ \vert V(K_{n}\star G)\vert $ and, $ r $ be the target vertex. By pigeonhole principle, there is a vertex of graph such that receives more than one pebble. Since $ diam(K_{n}\star G)=2$, if at least $ 4 $ pebbles put on a vertex of graph, then target vertex $ r $ will be solvable. Considering all three arbitrary vertices of the graph have a common neighbour of $ K_{n} $,  so two distinct vertices that each others have at least $ 2 $ pebbles can put one pebble on the target vertex. The last case is that one vertex has three pebbles and the other vertices have at most one pebble, in this case only two vertices will be without pebbles, so $ r $ will be solvable.
  \end{proof}
  \begin{corollary}
  	$ K_{n}\star K_{m} $ for $ n\geqslant 4 $ is class $0$.
  \end{corollary}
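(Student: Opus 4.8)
The plan is to show that every configuration $C$ of size $n+hm$ on $K_n \star K_m$ (where $n \geq 4$ and $K_m$ plays the role of $G$, with $h = m$, so $|V(K_n\star K_m)| = n + nm$) is $r$-solvable for an arbitrary target $r$. Actually, more carefully: I would prove the general statement for $K_n \star G$ with $|V(G)| = h$, so the total order is $n + nh$. First I would recall the structure: the vertex set partitions into the ``core'' $V(K_n) = \{x_1,\dots,x_n\}$ and $n$ ``petal'' copies $G_1,\dots,G_n$, where every vertex of $G_i$ is adjacent to $x_j$ for all $j \neq i$ (but not to $x_i$), and the core is a clique. A crucial observation to establish up front is that the diameter is $2$: any two core vertices are adjacent; a petal vertex in $G_i$ and a core vertex $x_j$ are adjacent when $j\neq i$ and at distance $2$ via any $x_k$, $k\notin\{i,j\}$, when $j=i$ (here $n\geq 4$ guarantees such a $k$ exists, in fact $n \geq 3$ suffices); two petal vertices in different copies share a common core neighbour; two petal vertices in the same copy $G_i$ have common neighbour $x_j$ for $j\neq i$. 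So $\mathrm{diam}(K_n\star G) = 2$.

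The core of the argument is the standard ``weight function'' / case analysis for diameter-two graphs, but sharpened so that no ``$+1$'' is needed. Fix the target $r$ and let $C$ be any configuration of size $N := n+nh$. First, if some vertex $v \neq r$ carries at least $4$ pebbles, then since $d(v,r)\le 2$ we move $\lfloor 4/4\rfloor = 1$ pebble to $r$ and are done; so assume every non-target vertex has at most $3$ pebbles. Next, if $r$ itself already has a pebble we are done, so assume $C(r)=0$. Now split into the two subcases suggested by the proof sketch. Subcase (a): no vertex has $3$ pebbles. Then all non-target vertices have $0$, $1$, or $2$ pebbles, and the total $N = n+nh$ forces — since there are only $N-1$ non-target vertices — that at least two distinct vertices $u,w$ carry $2$ pebbles each (if at most one vertex had $\ge 2$, the total would be at most $(N-2)\cdot 1 + 2 = N$, which is tight only when exactly one vertex has $2$ and all others have $1$; a short count shows we can actually always find two vertices with $\ge 2$, or handle the tight case directly). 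The key geometric fact to invoke here is that \emph{any two} vertices $u,w$ of $K_n \star G$ have a common neighbour: if both are in the core, or one in the core, this is immediate; if $u\in G_i$, $w\in G_j$ with $i\ne j$ then $x_k$ for $k\notin\{i,j\}$ works (needs $n\ge 3$); if $u,w\in G_i$ then any $x_j$, $j\ne i$, works. Moreover that common neighbour can be chosen adjacent to $r$ as well, or equal to $r$, or at distance $1$ from $r$ — using $n\ge 4$ to have enough core vertices to avoid the ``bad'' index. Then move one pebble from $u$ and one from $w$ to a common neighbour $z$ with $d(z,r)\le 1$, giving $z$ two pebbles, then move one to $r$.

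Subcase (b): some vertex $v$ has exactly $3$ pebbles. Removing $v$, the remaining $N-3$ pebbles sit on $N-1$ vertices each with $\le 1$ pebble (we already excluded $\ge 4$, and $3$-pebble vertices other than $v$ would be handled the same way — or we just pick $v$ to be any $3$-pebble vertex and note at most... here I would argue there is essentially one such vertex or adapt). With $3$ on $v$ and the rest $0/1$, the number of \emph{empty} non-target vertices is at most $2$ (since $N-3$ pebbles on $N-1$ vertices with multiplicities $\le 1$ leaves $\le 2$ empty — wait, that's $3-1 = 2$ empty, matching the sketch). So at most two vertices besides $v$ and $r$ are empty; in particular there is a vertex $w\neq v, r$ with $C(w)\ge 1$ that is adjacent to $r$, or there is a vertex with a pebble adjacent to a common neighbour of $v$ and $r$. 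Combining the $3$ pebbles on $v$ (which yield one pebble delivered to any neighbour of $v$, including toward $r$) with one extra pebble elsewhere, route to $r$: move $2$ from $v$ to a common neighbour $z$ of $v$ and $r$ (leaving $1$ on $v$), then $z$ has $\ge 1$; if $z$ also picks up the spare pebble from $w$ (when $w$ is adjacent to $z$) or if instead $w$ is adjacent to $r$ directly with $\ge 2$... The bookkeeping is that $3$ pebbles on $v$ plus the guaranteed nearby spare always suffice because the huge number of non-empty vertices (all but $\le 2$) leaves us plenty of freedom in a diameter-two graph.

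The main obstacle I anticipate is the tight-count bookkeeping in Subcase (a) and the boundary of Subcase (b): making sure that the ``common neighbour $z$'' can \emph{always} be chosen with $d(z,r)\le 1$, which is exactly where $n\ge 4$ is needed (with $n=3$ the corona $K_3\star K_m$ is only Class $1$, as the previous theorem shows, so the argument must genuinely break there). I would handle this by a careful enumeration of the position of $r$ (core vs.\ petal) against the positions of $u,w$ (or $v$), checking in each of the few cases that among the $n\ge 4$ core vertices there is one serving simultaneously as the needed common neighbour and as a neighbour of (or equal to) $r$. Once that lemma-like fact is isolated, the rest is the routine diameter-two pebbling argument, and the conclusion $\pi(K_n\star G) = n+nh = |V(K_n\star G)|$, i.e.\ Class $0$, follows; the corollary for $G = K_m$ is then immediate.
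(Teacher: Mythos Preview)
Your proposal is correct and follows essentially the same route as the paper: you prove the general statement that $K_n \star G$ is Class~$0$ for $n \geq 4$ by the same case analysis (a vertex with $\geq 4$ pebbles; two vertices each with $\geq 2$; one vertex with $3$ and the rest $\leq 1$), and then specialize to $G = K_m$. The paper's only streamlining is to phrase the key geometric fact as ``any three vertices of $K_n \star G$ have a common neighbour in the core $K_n$'' (this is exactly where $n \geq 4$ enters, since each vertex excludes at most one core index), which dispatches your Subcase~(a) in one step and is precisely what your ``$z$ can be chosen adjacent to $r$'' is getting at.
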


  \section{Optimal pebbling number}  
  In this section, we consider the best scenario, that is, the smallest configuration that can solve any goal. The optimal pebbling number of a graph $ G $, denoted by the symbol $ \pi^{*}(G) $, is equal to the smallest number $ m $ such that there is a  configuration $ C $ with size  $ m $ which is a solution for each target $ r $.   
  
  \begin{theorem}
  	For any graph $H$ and any complete graph $ K_{n}$, 
  	\begin{itemize}
  		\item[i)]
  		$ \pi^{*}(K_{n}\circ H)=4 $ for $ n\geqslant 3$,
  		\item[ii)]
  		$ \pi^{*}(K_{n}\star H)=3 $ for $ n\geqslant 2$.
  	\end{itemize}
  \end{theorem}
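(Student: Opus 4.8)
The plan is to prove matching lower and upper bounds in each part. Throughout write $V(K_n)=\{v_1,\dots,v_n\}$, let $H_i$ denote the $i$-th copy of $H$ in the product under consideration, and write $N[x]$ for the closed neighbourhood of a vertex $x$ (that is, $x$ together with all its neighbours). We may assume $H$ has at least one vertex, since otherwise both products collapse to $K_n$.

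For the upper bounds, the key fact is that a vertex carrying $2^k$ pebbles solves every target within distance $k$ of it. In $K_n\circ H$ the vertex $v_1$ has eccentricity $2$: every $v_j$ and every vertex of $H_1$ is adjacent to $v_1$, while a vertex of $H_i$ with $i\neq 1$ is reached through $v_i$. Hence placing $4=2^2$ pebbles on $v_1$ solves every target, so $\pi^*(K_n\circ H)\le 4$ (this bound alone needs only $n\ge 2$). For $K_n\star H$ the eccentricity argument gives only $\pi^*\le 4$ — indeed $\le 8$ when $n=2$, where the diameter is $3$ — so instead I would take the configuration $C$ with $C(v_1)=2$, $C(v_2)=1$. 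Then $v_1$ is already covered; every $v_j$ and every vertex of $H_i$ with $i\neq 1$ is a neighbour of $v_1$, hence solved by one step from $v_1$; and a vertex $w$ of $H_1$ is a neighbour of $v_2$ (because $H_1$ is attached to the neighbours of $v_1$, which include $v_2$), so one first moves the two pebbles from $v_1$ to $v_2$, collecting two pebbles there, and then steps to $w$. This works for every $n\ge 2$, giving $\pi^*(K_n\star H)\le 3$.

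For the lower bounds I would show that no configuration of size $m-1$ solves all targets, by a short case analysis on how the pebbles are distributed. A configuration placing at most one pebble on each vertex admits no pebbling step and so solves only its pebbled vertices; this already fails, since $|V(K_n\circ H)|=|V(K_n\star H)|=n(1+h)$ exceeds $3$ when $n\ge 3$ and exceeds $2$ when $n\ge 2$. A size-$3$ configuration with $3$ pebbles on one vertex $x$ can solve only $N[x]$, and one of the form ``$2$ pebbles on $x$, $1$ on $y$'' can solve only vertices in $N[x]\cup N[y]$; likewise a size-$2$ configuration ``$2$ pebbles on $x$'' solves only $N[x]$. It therefore suffices to check that in $K_n\circ H$ with $n\ge 3$ no union $N[x]\cup N[y]$ is all of $V$, and in $K_n\star H$ with $n\ge 2$ no single $N[x]$ is all of $V$. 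The latter is immediate: $N[v_i]$ omits $V(H_i)$ in both products, while the closed neighbourhood of a vertex of $H_i$ omits the hub $v_i$. For the former, the sets $P_k=\{v_k\}\cup V(H_k)$ partition $V(K_n\circ H)$, and $N[u]$ meets $V(H_k)$ only when $u\in P_k$; since $x$ and $y$ together lie in at most two of these parts while there are $n\ge 3$ of them, some $P_\ell$ is untouched, and any vertex of $H_\ell$ then lies outside $N[x]\cup N[y]$, hence is unsolvable.

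The only step requiring genuine care is the claim that a size-$3$ configuration cannot solve a vertex outside $N[x]\cup N[y]$ (everything else is bookkeeping, together with the minor nuisance of treating $n=2$ separately in part (ii) via the explicit configuration above). This is a short counting argument: to deliver a pebble to a vertex $z\notin N[x]\cup N[y]$, the final move must come from a neighbour $u$ of $z$ holding two pebbles, but $z\notin N[x]\cup N[y]$ forces $u\notin\{x,y\}$, so $u$ is initially unpebbled; since each pebbling step increases the count on $u$ by at most one while decreasing the total count by one, reaching two pebbles on $u$ would require two steps, after which only one pebble remains in the whole graph — a contradiction. Hence $u$ never holds two pebbles and $z$ is unsolvable. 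With this in hand the case analysis closes, and combining with the upper bounds yields $\pi^*(K_n\circ H)=4$ for $n\ge 3$ and $\pi^*(K_n\star H)=3$ for $n\ge 2$.
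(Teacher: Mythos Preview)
Your proof is correct and in fact more complete than the paper's. For the upper bounds the paper uses essentially the same ideas: the configuration in (ii) is identical (two pebbles on one vertex of $K_n$, one on another), while in (i) the paper places two pebbles on each of two vertices of $K_n$ rather than four on a single vertex, invoking $\pi^*(K_n)=2$ to argue that two pebbles can be delivered to any $v_i$ and then pushed into $H_i$; both constructions certify $\pi^*\le 4$. The paper, however, offers no argument whatsoever for the matching lower bounds $\pi^*(K_n\circ H)\ge 4$ and $\pi^*(K_n\star H)\ge 3$ --- it simply asserts the equalities. Your case analysis on size-$3$ and size-$2$ configurations, together with the partition $P_k=\{v_k\}\cup V(H_k)$ to show that two closed neighbourhoods cannot cover $K_n\circ H$ when $n\ge 3$, supplies exactly what is missing and also makes transparent why the hypothesis $n\ge 3$ is needed in (i).
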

  \begin{proof}
  	\begin{itemize}
  		\item[i)]
  		Note that $ \pi^{*}(K_{n})=2 $, therefore one can reach $ 2 $ pebbles to any vertex by putting  $ 2 $ pebbles on the each of two vertices $ v $ and $ w $ of graph $ K_{n} $. Therefore the result follows.
  		\item[ii)]
  		Since any vertex $ v $ of the graph $ K_{n} $ is adjacent to $ n-1 $ copies of $ H $, so, it is satisfied to put $ 2 $ pebbles on the vertex $ v $ and put $ 1 $ pebble on another vertex $ w $ of $ K_{n} $.
  	\end{itemize}
  \end{proof}
  
  \section{Conclusion}
  In this paper we studied the pebbling number of corona and neighberhood corona  of two graphs. We obtained a result for the optimal pebbling number of corona and neighberhood corona of $ K_{n} $ and any graph. Also there many problems in the cover pebbling number of graphs such as cycles.  We close the paper with the following problems: 
  
  \medskip
  \noindent {\bf Problem 1}: What can be say about the pebbling number of the neighberhood corona of two arbitrary graphs?


\end{document}